 \definecolor{MyRed}{rgb}{0.9,0,0}
 \definecolor{MyGreen}{rgb}{0,0.9,0}
 \definecolor{MyBlue}{rgb}{0,0,0.9}
\theoremstyle{plain}
\newtheorem{theorem}{Theorem}[section]
\newtheorem{lemma}[theorem]{Lemma}
\newtheorem{corollary}[theorem]{Corollary}
\newtheorem{conjecture}[theorem]{Conjecture}
\renewcommand{\leq}{\leqslant}
\renewcommand{\geq}{\geqslant}
\def\COMMENT#1{}
\let\COMMENT=\footnote
\begin{document}

\title{A $q$-analogue of the four functions theorem}
\author{Demetres Christofides}
\email{christod@maths.bham.ac.uk}
\thanks{Supported by the EPSRC, grant no. EP/E02162X/1.}
\address{School of Mathematics, University of
Birmingham, Edgbaston, Birmingham, B15 2TT, UK}
\date{\today}
\subjclass[2000]{05A20; 06A07; 60C05}
\keywords{Inequalities; Four functions theorem; $q$-analogues}
\begin{abstract}
In this article we give a proof of a $q$-analogue of the celebrated
four functions theorem. This analogue was conjectured by Bj\"orner
and includes as special cases both the four functions theorem and
also Bj\"orner's $q$-analogue of the FKG inequality.
\end{abstract}
\maketitle

\section{Introduction}

We denote the set of the first $n$ positive integers by $[n]$ and
the power set of $[n]$ by $\mathcal{P}(n)$. Given families
$\mathcal{A},\mathcal{B} \subseteq \mathcal{P}(n)$ we write
$\mathcal{A} \vee \mathcal{B} = \{A \cup B: A \in \mathcal{A},B \in
\mathcal{B}\}$ and $\mathcal{A} \wedge \mathcal{B} = \{A \cap B: A
\in \mathcal{A},B \in \mathcal{B}\}$. Given $f: \mathcal{P}(n) \to
\mathbb{R}$ and $\mathcal{A} \subseteq \mathcal{P}(n)$ we let
$f(\mathcal{A}) = \sum_{A \in \mathcal{A}}f(A)$. The four functions
theorem of Ahlswede and Daykin~\cite{Ahlswede&Daykin78} states the
following.

\begin{theorem}\label{4FT}
Let $\alpha,\beta,\gamma$ and $\delta$ be functions from
$\mathcal{P}(n)$ to the set of non-negative reals satisfying
\[
\alpha(A)\beta(B) \leq \gamma(A \cup B) \delta(A \cap B)
\]
for every $A,B \in \mathcal{P}(n)$. Then
\[
\alpha(\mathcal{A})\beta(\mathcal{B}) \leq \gamma(\mathcal{A} \vee
\mathcal{B}) \delta(\mathcal{A} \wedge \mathcal{B})
\]
for every $\mathcal{A},\mathcal{B} \subseteq \mathcal{P}(n)$.
\end{theorem}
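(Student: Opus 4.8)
The plan is to reduce the statement to the special case $\mathcal{A} = \mathcal{B} = \mathcal{P}(n)$ and then to prove that case by induction on $n$, with the entire analytic content concentrated in a single $2 \times 2$ inequality. For the reduction, given arbitrary $\mathcal{A},\mathcal{B}$ I would replace $\alpha$ by the function agreeing with $\alpha$ on $\mathcal{A}$ and equal to $0$ elsewhere, and likewise replace $\beta,\gamma,\delta$ by their zero-extensions off $\mathcal{B}$, $\mathcal{A}\vee\mathcal{B}$, $\mathcal{A}\wedge\mathcal{B}$ respectively. The pointwise hypothesis survives this: if $A \in \mathcal{A}$ and $B \in \mathcal{B}$ then $A\cup B \in \mathcal{A}\vee\mathcal{B}$ and $A \cap B \in \mathcal{A}\wedge\mathcal{B}$, so the new inequality coincides with the old one, while if $A \notin \mathcal{A}$ or $B \notin \mathcal{B}$ the left-hand side is $0$. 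Since the full sum of the zero-extended $\alpha$ over $\mathcal{P}(n)$ is exactly $\alpha(\mathcal{A})$, and similarly for the other three, the summed statement for the extended functions is precisely the desired conclusion.

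It therefore suffices to prove, under the pointwise hypothesis, that $\alpha(\mathcal{P}(n))\beta(\mathcal{P}(n)) \leq \gamma(\mathcal{P}(n))\delta(\mathcal{P}(n))$, which I would do by induction on $n$ with trivial base $n=0$ (where $\mathcal{P}(0)=\{\emptyset\}$ and the claim is just the hypothesis at $A=B=\emptyset$). The engine of the induction is the elementary $2\times 2$ fact: if non-negative reals satisfy $a_0 b_0 \leq c_0 d_0$, $a_1 b_1 \leq c_1 d_1$, and $a_0 b_1, a_1 b_0 \leq c_1 d_0$, then $(a_0+a_1)(b_0+b_1) \leq (c_0+c_1)(d_0+d_1)$. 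I expect this to be the main obstacle, as it carries the real difficulty. The terms $a_0 b_0$ and $a_1 b_1$ are already dominated by $c_0 d_0$ and $c_1 d_1$; the genuine issue is to bound $a_0 b_1 + a_1 b_0$ by $c_0 d_1 + c_1 d_0$. Writing $p=a_0b_1$, $q=a_1 b_0$, $s=c_1 d_0$, $t=c_0 d_1$, the premises give $p,q \leq s$ and $pq = (a_0 b_0)(a_1 b_1) \leq (c_0 d_0)(c_1 d_1) = st$, and I would deduce $p+q \leq s+t$ by a short case analysis: after disposing of the degenerate cases $c_0+c_1=0$ or $d_0+d_1=0$ (in which all relevant products vanish), the inequality is immediate when $s=0$ or $t \geq s$, and otherwise follows by noting that $p + st/p$ is maximised at the endpoints of the admissible range $p \in [t,s]$, both of which yield the value $s+t$.

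For the inductive step from $n-1$ to $n$, I would split each subset of $[n]$ according to whether it contains $n$, writing $X = X'$ or $X = X' \cup \{n\}$ with $X' \subseteq [n-1]$, and define folded functions on $\mathcal{P}(n-1)$ by $\bar\alpha(A') = \alpha(A') + \alpha(A'\cup\{n\})$ and analogously $\bar\beta,\bar\gamma,\bar\delta$. Summing over both halves gives $\bar\alpha(\mathcal{P}(n-1)) = \alpha(\mathcal{P}(n))$ and likewise for the other three, so the target inequality for $n$ is exactly the summed statement for the folded functions on $[n-1]$; the inductive hypothesis delivers this once I verify that the folded functions satisfy the pointwise hypothesis. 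Fixing $A',B' \subseteq [n-1]$ and setting $a_0=\alpha(A')$, $a_1=\alpha(A'\cup\{n\})$, and so on, the four instances of the original hypothesis at the pairs $(A',B')$, $(A'\cup\{n\},B'\cup\{n\})$, $(A',B'\cup\{n\})$, $(A'\cup\{n\},B')$ yield precisely the four premises of the $2\times 2$ fact, where $n \notin A',B'$ is used to compute the unions and intersections. Its conclusion is the required $\bar\alpha(A')\bar\beta(B') \leq \bar\gamma(A'\cup B')\bar\delta(A'\cap B')$, which closes the induction and proves the theorem.
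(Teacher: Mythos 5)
The paper offers no proof of Theorem~\ref{4FT} to compare against: it is quoted as the known theorem of Ahlswede and Daykin and used later as a black box (e.g.\ in Lemma~\ref{4FT-setminus version}), so your task here was to supply the classical proof from scratch, and you have done so correctly. The reduction to $\mathcal{A}=\mathcal{B}=\mathcal{P}(n)$ by zero-extension is exactly right, the folding induction on the ground set is the standard one, and the four instances of the hypothesis at $(A',B')$, $(A'\cup\{n\},B'\cup\{n\})$, $(A',B'\cup\{n\})$, $(A'\cup\{n\},B')$ do yield precisely the four premises of your $2\times 2$ lemma. The only place you depart from the textbook treatment is inside that lemma: instead of the usual device of bounding $(c_0+c_1)(d_0+d_1)$ below by $\bigl(a_0b_0/d_0+c_1\bigr)\bigl(d_0+a_1b_1/c_1\bigr)$, you prove the termwise off-diagonal bound $a_0b_1+a_1b_0\le c_0d_1+c_1d_0$ from $p,q\le s$ and $pq\le st$; this stronger claim is genuinely true, and your observation that $p+st/p$ is convex on $[t,s]$ with value $s+t$ at both endpoints establishes it. You should, however, dispose explicitly of the cases not covered by that range: $p=0$ or $q=0$ (immediate, since the other is at most $s$), $t=0$ with $s>0$ (then $pq=0$, reducing to the previous case), and $\min(p,q)<t$ (then $p+q<t+s$ since the larger of the two is at most $s$). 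These are trivial to patch, so the proof stands; it is essentially the Ahlswede--Daykin argument with a slightly cleaner, more symmetric handling of the $2\times 2$ case.
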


This inequality generalized several well-known inequalities and has
found many applications. We refer the interested reader to the books
by Anderson~\cite{Anderson02} and Bollob\'as~\cite{Bollobas86} and
their relevant references for further discussions of this inequality
and its applications.

To state the next result we need to recall some facts from lattice
theory. We refer the reader to the classical book of
Birkhoff~\cite{Birkhoff79} for further information on lattices. A
\emph{lattice} $L$ is a partially ordered set in which every pair of
elements $x,y \in L$ has a unique least upper bound (denoted by $x
\vee y$) and a unique greatest lower bound (denoted by $x \wedge
y$). The lattice $L$ is called \emph{distributive} if for every
$x,y,z \in L$ the following holds:
\[
x \wedge (y \vee z) = (x \wedge y) \vee (x \wedge z).
\]
It is easy to check that $\mathcal{P}(n)$ is a distributive lattice.
Birkhoff's representation theorem asserts that every finite
distributive lattice is isomorphic to a sublattice of
$\mathcal{P}(n)$ for some $n$. In particular, Theorem~\ref{4FT} has
the following immediate consequence.

\begin{corollary}\label{4FT-Lattice}
Let $L$ be a finite distributive lattice and let
$\alpha,\beta,\gamma$ and $\delta$ be functions from $L$ to the set
of non-negative reals satisfying
\[
\alpha(x)\beta(y) \leq \gamma(x \vee y) \delta(x \wedge y)
\]
for every $x,y \in L$. Then
\[
\alpha(X)\beta(Y) \leq \gamma(X \vee Y) \delta(X \wedge Y)
\]
for every $X,Y \subseteq L$.
\end{corollary}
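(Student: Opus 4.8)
The plan is to deduce the statement directly from Theorem~\ref{4FT} by embedding $L$ into some $\mathcal{P}(n)$ and extending the four functions by zero. First I would invoke Birkhoff's representation theorem, as recalled above, to fix an isomorphism of $L$ onto a sublattice $L' \subseteq \mathcal{P}(n)$ for a suitable $n$. The crucial feature I would exploit is that a sublattice inherits the ambient operations: under this identification the join $x \vee y$ becomes the union $A \cup B$ and the meet $x \wedge y$ becomes the intersection $A \cap B$, where $A, B$ are the images of $x, y$. Transporting $\alpha, \beta, \gamma, \delta$ along the isomorphism, I may therefore assume from the outset that $L = L'$ is a sublattice of $\mathcal{P}(n)$ with $\vee = \cup$ and $\wedge = \cap$.

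Next I would extend each of $\alpha, \beta, \gamma, \delta$ to all of $\mathcal{P}(n)$ by declaring them to be $0$ on every set not belonging to $L'$; the extended functions remain non-negative. The key step is then to verify that these extensions satisfy the pointwise hypothesis of Theorem~\ref{4FT}, namely $\alpha(A)\beta(B) \leq \gamma(A\cup B)\delta(A\cap B)$ for all $A, B \in \mathcal{P}(n)$. I would split into two cases. If both $A$ and $B$ lie in $L'$, then because $L'$ is closed under $\cup$ and $\cap$ we have $A\cup B, A\cap B \in L'$, so all four values are the original ones and the inequality is exactly the assumed hypothesis of the corollary. If instead at least one of $A, B$ lies outside $L'$, then the left-hand side is $0$ while the right-hand side is non-negative, so the inequality holds trivially.

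With the hypothesis verified, I would apply Theorem~\ref{4FT} to the extended functions. Given $X, Y \subseteq L$, let $\mathcal{X}, \mathcal{Y} \subseteq \mathcal{P}(n)$ be their images under the identification $L = L'$. Since $\alpha, \beta$ vanish off $L'$ and the isomorphism is a bijection, $\alpha(\mathcal{X}) = \alpha(X)$ and $\beta(\mathcal{Y}) = \beta(Y)$; moreover $\mathcal{X} \vee \mathcal{Y} = X \vee Y$ and $\mathcal{X} \wedge \mathcal{Y} = X \wedge Y$ as families of sets, so $\gamma(\mathcal{X}\vee\mathcal{Y}) = \gamma(X \vee Y)$ and $\delta(\mathcal{X}\wedge\mathcal{Y}) = \delta(X\wedge Y)$. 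The conclusion of Theorem~\ref{4FT} then reads $\alpha(X)\beta(Y) \leq \gamma(X\vee Y)\delta(X\wedge Y)$, as required.

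I expect the only genuinely delicate point to be the bookkeeping around the isomorphism: one must confirm that the meet and join computed inside the sublattice $L'$ really coincide with the ambient set operations $\cup$ and $\cap$, so that the join and meet of the image families equal the images of the join and meet in $L$, and that the collapsing of repeated elements in $\mathcal{X}\vee\mathcal{Y}$ and $\mathcal{X}\wedge\mathcal{Y}$ matches the corresponding collapsing in $X \vee Y$ and $X \wedge Y$. Both facts follow from the embedding being a lattice isomorphism onto a genuine sublattice, but they are worth stating explicitly since the whole reduction rests on them.
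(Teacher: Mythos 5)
Your proposal is correct and is essentially the paper's own argument: the paper disposes of this corollary in one sentence by embedding $L$ into $\mathcal{P}(n)$ via Birkhoff's representation theorem, extending the four functions by zero outside the image, and applying Theorem~\ref{4FT}. You have simply spelled out the case analysis verifying the hypothesis for the extended functions, which the paper leaves implicit.
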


Indeed to prove the above corollary we just embed $L$ in
$\mathcal{P}(n)$ for some suitable $n$, extend $\alpha,\beta,\gamma$
and $\delta$ to be~0 outside $L$ and then apply Theorem~\ref{4FT}.

Let $L$ be a lattice. A function $f:L \to \mathbb{R}$ is called
\emph{increasing} if $f(x) \leq f(y)$ whenever $x \leq y$ and
\emph{decreasing} if $f(x) \leq f(y)$ whenever $x \geq y$. A
function $\mu$ from $L$ to the set of non-negative reals is said to
be \emph{log-supermodular} if it satisfies
\[
\mu(x)\mu(y) \leq \mu(x \vee y)\mu(x \wedge y)
\]
for every $x,y \in L$. It is usually convenient to think of $\mu$ as
a measure on $L$. We take this approach here and we thus define
\[
\int f d\mu := \sum_{x\in L}f(x)\mu(x).
\]
The following correlation inequality due to Fortuin, Kasteleyn and
Ginibre~\cite{Fortuin&Kasteleyn&Ginibre71} is known as the FKG
inequality.

\begin{corollary}\label{FKG}
Let $L$ be a finite distributive lattice, $\mu$ a log-supermodular
function on $L$ and $f,g$ functions from $L$ to the set of
non-negative reals which are either both increasing or both
decreasing. Then
\[
\int f d\mu \int g d\mu \leq \int 1 d\mu \int fg d\mu.
\]
\end{corollary}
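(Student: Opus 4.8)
The plan is to deduce this from the lattice form of the four functions theorem, namely Corollary~\ref{4FT-Lattice}, by making a judicious choice of the four functions and then applying it with $X = Y = L$.

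First I would reduce to a single monotonicity case. The hypotheses and the conclusion are unchanged if we pass from $L$ to its order-dual: the dual of a finite distributive lattice is again a finite distributive lattice, the roles of $\vee$ and $\wedge$ are interchanged (so log-supermodularity of $\mu$ is preserved), and the integrals $\int f\,d\mu$, $\int g\,d\mu$, $\int 1\,d\mu$, $\int fg\,d\mu$ do not depend on the ordering at all. Under this duality a pair of increasing functions becomes a pair of decreasing functions, so I may assume without loss of generality that $f$ and $g$ are both decreasing.

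Next I would set up the four functions. Define $\alpha,\beta,\gamma,\delta : L \to \mathbb{R}_{\geq 0}$ by $\alpha(x) = f(x)\mu(x)$, $\beta(x) = g(x)\mu(x)$, $\gamma(x) = \mu(x)$ and $\delta(x) = f(x)g(x)\mu(x)$; these are non-negative since $f,g$ and $\mu$ are. The point of this choice is that, taking $X = Y = L$, we have $X \vee Y = L$ and $X \wedge Y = L$ (each $z \in L$ equals $z \vee z = z \wedge z$), so the conclusion of Corollary~\ref{4FT-Lattice} reads
\[
\Big(\sum_{x} f(x)\mu(x)\Big)\Big(\sum_{x} g(x)\mu(x)\Big) \leq \Big(\sum_{x}\mu(x)\Big)\Big(\sum_{x} f(x)g(x)\mu(x)\Big),
\]
which is precisely $\int f\,d\mu\,\int g\,d\mu \leq \int 1\,d\mu\,\int fg\,d\mu$.

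The main work, and the only place where the hypotheses are really used, is to verify the pointwise inequality $\alpha(x)\beta(y) \leq \gamma(x\vee y)\delta(x\wedge y)$ for all $x,y \in L$, which written out asks that
\[
f(x)g(y)\,\mu(x)\mu(y) \leq f(x\wedge y)g(x\wedge y)\,\mu(x\vee y)\mu(x\wedge y).
\]
I would establish this by separating the measure part from the function part. Log-supermodularity gives $\mu(x)\mu(y)\leq \mu(x\vee y)\mu(x\wedge y)$, so it suffices to check $f(x)g(y)\leq f(x\wedge y)g(x\wedge y)$; since $x\wedge y\leq x$ and $x\wedge y\leq y$ and $f,g$ are decreasing, we get $f(x)\leq f(x\wedge y)$ and $g(y)\leq g(x\wedge y)$, and multiplying these non-negative quantities yields the claim, after which multiplying by the measure inequality (again all terms non-negative) gives the displayed pointwise bound. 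The one subtlety to flag is that this forces $f$ and $g$ onto the $\wedge$ side (through $\delta$), precisely so that their decrease matches the direction $x\wedge y \leq x,y$; for increasing functions one would instead place $fg$ on the $\vee$ side, which is exactly the eventuality the initial reduction to the decreasing case removes. With the pointwise inequality in hand, Corollary~\ref{4FT-Lattice} completes the proof.
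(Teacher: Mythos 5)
Your proposal is correct and follows essentially the same route as the paper, which proves Corollary~\ref{FKG} by applying Corollary~\ref{4FT-Lattice} with exactly your choice $\alpha = f\mu$, $\beta = g\mu$, $\gamma = \mu$, $\delta = fg\mu$. You merely supply the details the paper leaves implicit: the verification of the pointwise hypothesis in the decreasing case and the reduction of the increasing case to it (by duality, or equivalently by swapping the roles of $\gamma$ and $\delta$).
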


This can be proved by applying Corollary~\ref{4FT-Lattice} with
$\alpha = f \mu , \beta = g \mu, \gamma = \mu$ and $\delta = f g
\mu$. The FKG inequality originally arose in the study of Ising
ferromagnets and the random cluster model. It later found many
application in extremal and probabilistic combinatorics. Recently,
Bj\"orner obtained a $q$-analogue of this inequality. Before stating
it we need to introduce some further notation.

Given a finite lattice $L$ and an element $x$ of $L$ the \emph{rank}
or \emph{height} of $x$ is the length of the longest chain having
$x$ as a maximal element and is denoted by $r(x)$. If $L$ is
distributive then the rank function satisfies the following
\emph{modular law}:
\[
r(x) + r(y) = r(x \vee y) + r(x \wedge y) \text{ for every } x,y \in
L.
\]
Given a finite distributive lattice $L$ with rank function $r$ and
functions $f,\mu:L \to \mathbb{R}$ we define the polynomial
\[
P_\mu(f;q) = \int f(x)q^{r(x)} d\mu.
\]
Finally, given polynomials $P(q),R(q) \in \mathbb{R}[q]$, we write
$P(q) \ll R(q)$ to denote that all coefficients of the polynomial
$R(q) - P(q)$ are non-negative reals.

Bj\"orner's $q$-analogue of the FKG-inequality~\cite{Bjorner} reads
as follows.

\begin{theorem}\label{q-FKG}
Let $L$ be a finite distributive lattice, $\mu$ a log-supermodular
function on $L$ and $f,g$ functions from $L$ to the set of
non-negative reals which are either both increasing or both
decreasing. Then
\[
P_\mu(f;q) P_\mu(g;q) \ll P_\mu(1;q) P_\mu(fg;q).
\]
\end{theorem}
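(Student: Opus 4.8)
The plan is to deduce Theorem~\ref{q-FKG} from a $q$-analogue of the four functions theorem, exactly as Corollary~\ref{FKG} is deduced from Theorem~\ref{4FT}. Concretely, I would first isolate the statement: for a finite distributive lattice $L$ with rank function $r$ and functions $\alpha,\beta,\gamma,\delta:L\to\mathbb{R}_{\geq 0}$ satisfying $\alpha(x)\beta(y)\leq\gamma(x\vee y)\delta(x\wedge y)$ for all $x,y\in L$, one has
\[
\Bigl(\sum_{x\in L}\alpha(x)q^{r(x)}\Bigr)\Bigl(\sum_{y\in L}\beta(y)q^{r(y)}\Bigr)\ll\Bigl(\sum_{z\in L}\gamma(z)q^{r(z)}\Bigr)\Bigl(\sum_{w\in L}\delta(w)q^{r(w)}\Bigr).
\]
Granting this, Theorem~\ref{q-FKG} follows via the classical choice $\alpha=f\mu$, $\beta=g\mu$, together with $\gamma=fg\mu,\ \delta=\mu$ in the increasing case and $\gamma=\mu,\ \delta=fg\mu$ in the decreasing case: the pointwise hypothesis reduces to log-supermodularity of $\mu$ combined with $f(x)g(y)\leq f(x\vee y)g(x\vee y)$ (respectively $\leq f(x\wedge y)g(x\wedge y)$), and the two sides are precisely $P_\mu(f;q)P_\mu(g;q)$ and $P_\mu(1;q)P_\mu(fg;q)$. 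It is essential that the modular law $r(x)+r(y)=r(x\vee y)+r(x\wedge y)$ makes the exponents on the two sides match termwise, so that the $q$-gradings are compatible. I would stress at the outset that the weaker assertion ``the difference polynomial is non-negative for every real $q\geq 0$'' is immediate, since $\mu_q(x):=\mu(x)q^{r(x)}$ is again log-supermodular for each fixed $q\geq 0$ by modularity and Corollary~\ref{FKG} applies; but this does \emph{not} yield $\ll$, and the entire difficulty lies in the coefficientwise conclusion.

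To prove the displayed $q$-four functions statement I would reduce to the Boolean lattice. Embed $L$ as a sublattice of some $\mathcal{P}(n)$ by Birkhoff's theorem; this embedding is rank-preserving, because the rank of an element of a finite distributive lattice equals the number of join-irreducibles below it, which is exactly its cardinality as a subset. Extending $\alpha,\beta,\gamma,\delta$ by $0$ outside $L$ leaves the hypothesis intact (whenever a left-hand product is nonzero both arguments lie in $L$, where the lattice operations agree with $\cup,\cap$) and changes none of the four weighted sums, so it suffices to treat $L=\mathcal{P}(n)$ with $r=|\cdot|$. Here I would induct on $n$, allowing the functions to take values in $R:=\mathbb{R}_{\geq 0}[q]$ with hypothesis and conclusion read via $\ll$ (the genuine theorem being the case of constant values). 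The inductive step peels off the coordinate $n$: writing $\tilde\alpha(A')=\alpha(A')+q\,\alpha(A'\cup\{n\})$ for $A'\subseteq[n-1]$, and similarly for $\beta,\gamma,\delta$, one checks the telescoping identity $\sum_{A\subseteq[n]}\alpha(A)q^{|A|}=\sum_{A'\subseteq[n-1]}\tilde\alpha(A')q^{|A'|}$, so it is enough to verify that the tilde-functions satisfy the hypothesis on $\mathcal{P}(n-1)$, namely $\tilde\alpha(A')\tilde\beta(B')\ll\tilde\gamma(A'\cup B')\tilde\delta(A'\cap B')$. Expanding the four original inequalities for the pairs $(A',B'),(A'\cup\{n\},B'),(A',B'\cup\{n\}),(A'\cup\{n\},B'\cup\{n\})$ shows this is exactly a two-point instance over $R$; note that the same indeterminate $q$ is reused at every peeling, so no iterated polynomial rings appear.

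Everything thus rests on the following Key Lemma over $R=\mathbb{R}_{\geq 0}[q]$: if $\alpha_0,\alpha_1,\beta_0,\beta_1,\gamma_0,\gamma_1,\delta_0,\delta_1\in R$ satisfy $\alpha_0\beta_0\ll\gamma_0\delta_0$, $\alpha_0\beta_1\ll\gamma_1\delta_0$, $\alpha_1\beta_0\ll\gamma_1\delta_0$ and $\alpha_1\beta_1\ll\gamma_1\delta_1$, then $(\alpha_0+q\alpha_1)(\beta_0+q\beta_1)\ll(\gamma_0+q\gamma_1)(\delta_0+q\delta_1)$. This is the main obstacle, and the point at which the $q$-analogue departs from the classical theorem. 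The one-line classical argument establishes $\gamma_1\delta_0\cdot\bigl[(\gamma_0+\gamma_1)(\delta_0+\delta_1)-(\alpha_0+\alpha_1)(\beta_0+\beta_1)\bigr]\geq 0$ and then divides by $\gamma_1\delta_0$; over $(R,\ll)$ this division is simply invalid, since $s\cdot h\gg 0$ with $s\gg 0$ does not force $h\gg 0$ (for instance $(1+q)(1-q+q^2)=1+q^3$). My plan is instead to use the exact decomposition
\begin{align*}
(\gamma_0+q\gamma_1)(\delta_0+q\delta_1)-(\alpha_0+q\alpha_1)(\beta_0+q\beta_1)
&=(\gamma_0\delta_0-\alpha_0\beta_0)+q(\gamma_1\delta_0-\alpha_0\beta_1)\\
&\quad+q(\gamma_1\delta_0-\alpha_1\beta_0)+q^2(\gamma_1\delta_1-\alpha_1\beta_1)+q(\gamma_0\delta_1-\gamma_1\delta_0),
\end{align*}
in which the first four bracketed terms are manifestly in $R$ and only $q(\gamma_0\delta_1-\gamma_1\delta_0)$ is indefinite, and then to exploit the rank-one identities $\alpha_0\beta_0\cdot\alpha_1\beta_1=\alpha_0\beta_1\cdot\alpha_1\beta_0$ and $\gamma_0\delta_0\cdot\gamma_1\delta_1=\gamma_0\delta_1\cdot\gamma_1\delta_0$ to show that the surplus carried by the diagonal gaps dominates this residue coefficientwise. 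Producing this non-negative-coefficient certificate, presumably by an induction on the degrees of the polynomials or by a careful direct bookkeeping of coefficients, is the technical heart of the argument and the step I expect to be hardest.
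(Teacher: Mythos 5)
Your first two reductions are sound and agree with the paper: Theorem~\ref{q-FKG} does follow from the $q$-four functions inequality (Theorem~\ref{q-4FT-Lattice} with $X=Y=L$) via $\alpha=f\mu$, $\beta=g\mu$, and the reduction to $L=\mathcal{P}(n)$ uses exactly the rank-preserving Birkhoff embedding of Theorem~\ref{BRT}. The genuine gap is your Key Lemma: it is false, so no non-negative-coefficient certificate exists and the coordinate-peeling induction collapses. Take
\[
\alpha_0=\beta_0=3q,\qquad \alpha_1=\beta_1=(1+q)^2,\qquad \gamma_0=9q^2,\qquad \gamma_1=(1+q)^4,\qquad \delta_0=\delta_1=1 .
\]
Then $\alpha_0\beta_0=\gamma_0\delta_0=9q^2$ and $\alpha_1\beta_1=\gamma_1\delta_1=(1+q)^4$, while $\gamma_1\delta_0-\alpha_0\beta_1=(1+q)^2\bigl((1+q)^2-3q\bigr)=(1+q)(1+q^3)\gg 0$ and likewise for $\alpha_1\beta_0$, so all four hypotheses hold; but
\[
(\gamma_0+q\gamma_1)(\delta_0+q\delta_1)-(\alpha_0+q\alpha_1)(\beta_0+q\beta_1)=q\,(1-q+q^2)^2=q-2q^2+3q^3-2q^4+q^5\not\gg 0 .
\]
Here the diagonal gaps are zero, so nothing is available to absorb the indefinite residue $q(\gamma_0\delta_1-\gamma_1\delta_0)$ in your decomposition, and the rank-one identities are already saturated and give no further information. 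This is not a repairable technicality but the same phenomenon exhibited in Section~3 of the paper: the ``local'' strengthening (Conjecture~\ref{False conjecture}), which is exactly what a peeling induction needs in order to propagate the hypothesis, already fails on $\mathcal{P}(2)$. The paper's actual route is different: for the coefficient of $q^k$ it groups the pairs $(A,B)$ not by single coordinates (nor by $(|A|,|B|)$, which also fails) but by $(A\cap B,A\cup B)=(F,G)$, reducing to the antipodal inequality $\sum_A\alpha(A)\beta(A^c)\leq\sum_C\gamma(C)\delta(C^c)$ on $\mathcal{P}(G\setminus F)$ (Theorem~\ref{q-4FT-stronger}); that in turn is deduced from the classical four functions theorem applied to $f(A)=\alpha(A)\beta(A^c)$ and $g(A)=\gamma(A^c)\delta(A)$ via the complementation trick of Lemma~\ref{4FT-setminus version}. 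You would need to replace your induction by an argument of this kind.
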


The FKG inequality is obtained from the above result by putting
$q=1$. Several applications of Theorem~\ref{q-FKG} can be found
in~\cite{Bjorner}. It is natural to ask whether there the
corresponding $q$-analogue of the four functions theorem is true.
Answering a question of Bj\"orner~\cite{Bjorner} we prove the
following result.

\begin{theorem}\label{q-4FT-Lattice}
Let $L$ be a finite distributive lattice and let
$\alpha,\beta,\gamma$ and $\delta$ be functions from $L$ to the set
of non-negative reals satisfying
\[
\alpha(x)\beta(y) \leq \gamma(x \vee y) \delta(x \wedge y)
\]
for every $x,y \in L$. Then
\[
\sum_{x \in X} \alpha(x)q^{r(x)} \sum_{x \in Y} \beta(x)q^{r(x)} \ll
\sum_{x \in X \vee Y} \gamma(x)q^{r(x)} \sum_{x \in X \wedge Y}
\delta(x)q^{r(x)}.
\]
for every $X,Y \subseteq L$.
\end{theorem}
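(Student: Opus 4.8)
The plan is to strip the statement down to a clean inductive core on the Boolean lattice, and then to concentrate essentially all of the difficulty into a single $q$-analogue of the two-element four functions theorem.

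\emph{Reductions.} Given $X,Y\subseteq L$, I would first replace $\alpha$ by the function equal to $\alpha$ on $X$ and $0$ elsewhere, and likewise restrict $\beta$ to $Y$, $\gamma$ to $X\vee Y$ and $\delta$ to $X\wedge Y$. The pointwise hypothesis is preserved, since the left-hand side vanishes as soon as one factor lies outside its family, and the claim becomes $P(\alpha)P(\beta)\ll P(\gamma)P(\delta)$ with $P(f)=\sum_{x\in L}f(x)q^{r(x)}$ and all four sums taken over the whole lattice. By Birkhoff's theorem I may realise $L$ as the lattice of down-sets of a poset on $[n]$, hence as a sublattice of $\mathcal{P}(n)$ with $r(x)=|x|$; extending the functions by $0$ off $L$ again preserves the hypothesis and changes neither $\vee$, $\wedge$ nor the weights. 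So it suffices to treat $L=\mathcal{P}(n)$ with $r=|\cdot|$, for functions valued in $\mathbb{R}_{\geq0}[q]$ (this wider class is forced on us below).

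\emph{Peeling a coordinate.} I would induct on $n$. Writing each set as $S$ or $S\cup\{n\}$ with $S\subseteq[n-1]$ and using $|S\cup\{n\}|=|S|+1$, one gets $P(\alpha)=\sum_{S\subseteq[n-1]}\big(\alpha(S)+q\,\alpha(S\cup\{n\})\big)q^{|S|}$, so peeling the last coordinate converts the four weighted sums on $\mathcal{P}(n)$ into the corresponding weighted sums on $\mathcal{P}(n-1)$ of the functions $\tilde\alpha(S)=\alpha(S)+q\,\alpha(S\cup\{n\})$ (and similarly $\tilde\beta,\tilde\gamma,\tilde\delta$), which take values in $\mathbb{R}_{\geq0}[q]$. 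The induction then closes by the inductive hypothesis, provided the $\tilde{}$-functions satisfy $\tilde\alpha(S)\tilde\beta(T)\ll\tilde\gamma(S\cup T)\tilde\delta(S\cap T)$. Abbreviating $a_0=\alpha(S)$, $a_1=\alpha(S\cup\{n\})$ and likewise for the others, and noting that $(A,B)\mapsto(A\cup B,A\cap B)$ carries the four lifts of $(S,T)$ to the four lifts of $(S\cup T,S\cap T)$, the original hypothesis gives exactly $a_0b_0\ll c_0d_0$, $a_1b_1\ll c_1d_1$, $a_0b_1\ll c_1d_0$ and $a_1b_0\ll c_1d_0$, while the bound to be proved is $(a_0+qa_1)(b_0+qb_1)\ll(c_0+qc_1)(d_0+qd_1)$. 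Thus everything rests on this last statement, which is precisely the $n=1$ case of the theorem for $\mathbb{R}_{\geq0}[q]$-valued functions.

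\emph{Where the difficulty lies.} This lemma is subtle because $\ll$ is only a partial order, so one cannot simply argue coefficient by coefficient as in the classical proof. In fact the ``middle coefficient'' reduction $a_0b_1+a_1b_0\ll c_0d_1+c_1d_0$ that one would want is \emph{false} for polynomials: taking $a_0=b_0=c_0=d_0=1$, $a_1=b_1=c_1=1+q$ and $d_1=2+q^2$ satisfies all four hypotheses, yet $c_0d_1+c_1d_0-(a_0b_1+a_1b_0)=1-q+q^2$ has a negative coefficient (while the full inequality does hold). Hence the weight $q$ that the peeled rank contributes is not cosmetic but is essential to the truth of the lemma, and the main obstacle is to use it correctly.

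\emph{Attacking the lemma.} I would begin from the numerical identity that powers the ordinary two-element case. With $p=a_0b_1$, $s=a_1b_0$, $P=c_1d_0$, $Q=c_0d_1$ one has
\[
P(P+Q-p-s)=(P-p)(P-s)+(PQ-ps),
\]
and each factor on the right is $\gg0$: the first two by $a_0b_1\ll c_1d_0$ and $a_1b_0\ll c_1d_0$, and $PQ-ps=(c_0d_0)(c_1d_1)-(a_0b_0)(a_1b_1)\gg0$ by multiplying the two diagonal hypotheses. This yields $P\cdot(P+Q-p-s)\gg0$, the correct polynomial substitute for the classical conclusion $P+Q-p-s\geq0$ (which itself fails). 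The remaining and hardest step is to upgrade this to
\[
(c_0d_0-a_0b_0)+q(P+Q-p-s)+q^2(c_1d_1-a_1b_1)\gg0,
\]
that is, to show that the two diagonal surpluses $c_0d_0-a_0b_0$ and $c_1d_1-a_1b_1$, carried at $q$-degrees $0$ and $2$, absorb the possibly negative coefficients of the middle term carried at degree $1$. I expect the clean way to do this is to exhibit an explicit $\gg0$ decomposition of the entire difference $(c_0+qc_1)(d_0+qd_1)-(a_0+qa_1)(b_0+qb_1)$ into products of the four hypothesised $\gg0$ quantities, rather than of $P$ times the middle term alone; finding that decomposition, which is exactly the point where the rank grading must enter, is the crux of the whole proof.
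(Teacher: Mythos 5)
Your reductions (restricting $\alpha,\beta,\gamma,\delta$ to $X$, $Y$, $X\vee Y$, $X\wedge Y$ and embedding $L$ into $\mathcal{P}(n)$ with $r(x)=|\phi(x)|$) are fine and agree with the paper's first step. The fatal problem is that the lemma to which you reduce everything --- the $n=1$ case for $\mathbb{R}_{\geq 0}[q]$-valued functions satisfying only the four $\ll$ hypotheses --- is \emph{false}, so the coordinate-peeling induction cannot close. Take $a_0=b_0=c_0=d_0=1$, $a_1=b_1=c_1=1+2q$ and $d_1=2+4q^2$. Then $a_0b_0=c_0d_0=1$, $a_0b_1=a_1b_0=c_1d_0=1+2q$, and $c_1d_1-a_1b_1=(1+2q)(2+4q^2)-(1+2q)^2=1+8q^3\gg 0$, so all four hypotheses hold with $\ll$; but
\[
(c_0+qc_1)(d_0+qd_1)-(a_0+qa_1)(b_0+qb_1)=q-q^2+4q^3+8q^5,
\]
which has a negative coefficient. (This is a scaled-up version of your own counterexample to the middle-coefficient inequality: once the linear terms are $2q$ rather than $q$, the deficit $-2$ in $c_0d_1+c_1d_0-a_0b_1-a_1b_0$ at degree $1$ is no longer absorbed by the surplus $+1$ of $c_1d_1-a_1b_1$ at degree $0$.) The underlying issue is that the $\ll$ hypotheses genuinely lose information after one peeling: the peeled functions do satisfy them, but they also satisfy much stronger relations inherited from the pointwise hypotheses on the original lattice, and it is only those that keep the theorem true. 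Your identity $P(P+Q-p-s)=(P-p)(P-s)+(PQ-ps)$ does prove the real-valued $n=1$ case (where $P=0$ can be treated separately and otherwise one divides by $P$), which is why the first peeling step works; it is the second step, $n=2$, where the induction breaks --- consistent with the paper's Section~3, where a different natural pointwise strengthening also fails first at $n=2$.

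For contrast, the paper avoids any such induction. It fixes the coefficient of $q^k$, i.e.\ proves $\sum_{|A|+|B|=k}\alpha(A)\beta(B)\le\sum_{|C|+|D|=k}\gamma(C)\delta(D)$, and splits the sum according to $(A\cap B,A\cup B)=(F,G)$; since $|A|+|B|=|F|+|G|$, the size constraint is automatic on each fibre, and after translating by $F$ the fibre inequality becomes $\sum_{A\subseteq G\setminus F}\alpha'(A)\beta'\bigl(A^c\bigr)\le\sum_{C\subseteq G\setminus F}\gamma'(C)\delta'\bigl(C^c\bigr)$, which is deduced from the classical four functions theorem applied to $f(A)=\alpha'(A)\beta'(A^c)$ and $g(A)=\gamma'(A^c)\delta'(A)$ via a set-difference variant of Theorem~\ref{4FT}. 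In effect the fibre decomposition is the device that carries the full pointwise data to where it is needed; to rescue your approach you would have to propagate comparably strong information through the induction, not merely the four $\ll$ inequalities.
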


It is easy to see that Theorem~\ref{q-4FT-Lattice} includes both
Theorem~\ref{4FT} and Theorem~\ref{q-FKG} as special cases. We thus
obtain a new proof of Theorem~\ref{q-FKG}. Unfortunately we do not
obtain a new proof of Theorem~\ref{4FT} as this theorem itself will
be used in the proof of Theorem~\ref{q-4FT-Lattice}.

We give the proof of Theorem~\ref{q-4FT-Lattice} in the next
section. In Section~3 we discuss a stronger conjecture which turns
out to be false. In the proof of Theorem~\ref{q-4FT-Lattice} we will
need to use a result which although not a corollary of Birkhoff's
representation theorem, it is a simple consequence of its proof. We
thus add an appendix with a proof of this result.

\section{Proof of Theorem~\ref{q-4FT-Lattice}}

We claim that it is enough to prove Theorem~\ref{q-4FT} in the case
when $L$ is the Boolean lattice $\mathcal{P}(n)$ and
$X=Y=\mathcal{P}(n)$. I.e.~it is enough to prove the following
theorem.

\begin{theorem}\label{q-4FT}
Let $\alpha,\beta,\gamma$ and $\delta$ be functions from
$\mathcal{P}(n)$ to the set of non-negative reals satisfying
\[
\alpha(A)\beta(B) \leq \gamma(A \cup B) \delta(A \cap B)
\]
for every $A,B \in \mathcal{P}(n)$. Then
\[
\sum_{A \in \mathcal{P}(n)} \alpha(A)q^{|A|} \sum_{B \in
\mathcal{P}(n)} \beta(B)q^{|B|} \ll \sum_{C \in \mathcal{P}(n)}
\gamma(C)q^{|C|} \sum_{D \in \mathcal{P}(n)} \delta(D)q^{|D|}.
\]
\end{theorem}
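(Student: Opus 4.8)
The plan is to prove Theorem~\ref{q-4FT} directly on the Boolean lattice, starting from the identity $|A|+|B| = |A\cup B| + |A\cap B|$, valid for all $A,B\in\mathcal{P}(n)$. Writing $P_\alpha(q) = \sum_A \alpha(A)q^{|A|}$ and similarly for $\beta,\gamma,\delta$, this identity shows that the coefficient of $q^k$ on each side of the desired relation is a sum over pairs of fixed total size. Hence the assertion $P_\alpha P_\beta \ll P_\gamma P_\delta$ is equivalent to the family of scalar inequalities
\[
\sum_{|A|+|B|=k}\alpha(A)\beta(B) \;\leq\; \sum_{|C|+|D|=k}\gamma(C)\delta(D)
\qquad (k=0,1,2,\ldots),
\]
which I shall call the graded inequalities and which I would establish by induction on $n$.

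First I would record what the ordinary four functions theorem gives for free, in order to locate the real difficulty. Because of the degree identity, the functions $\hat\alpha(A)=\alpha(A)q^{|A|}$, and analogously $\hat\beta,\hat\gamma,\hat\delta$, satisfy the hypothesis of Theorem~\ref{4FT} for every fixed positive real $q$, so applying that theorem yields $P_\alpha(q)P_\beta(q)\leq P_\gamma(q)P_\delta(q)$ for all $q>0$. This is strictly weaker than what we want, since a real polynomial non-negative on $(0,\infty)$ need not have non-negative coefficients; upgrading this pointwise inequality to coefficientwise domination is exactly the content of the theorem and is where the work lies.

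For the induction I would peel off the element $n$. For $S\subseteq[n-1]$ put $\alpha_0(S)=\alpha(S)$ and $\alpha_1(S)=\alpha(S\cup\{n\})$, and define $\beta_0,\beta_1,\gamma_0,\gamma_1,\delta_0,\delta_1$ the same way. Splitting each pair $(A,B)$ according to the membership of $n$ in $A$ and in $B$, the coefficient of $q^k$ on the left becomes a sum of four convolutions, in $\alpha_0\beta_0$, $\alpha_0\beta_1$, $\alpha_1\beta_0$ and $\alpha_1\beta_1$, and similarly on the right in the $\gamma_\bullet\delta_\bullet$. Reading the hypothesis off for the four membership patterns gives $\alpha_0(S)\beta_0(T)\leq\gamma_0(S\cup T)\delta_0(S\cap T)$, both $\alpha_1(S)\beta_0(T)$ and $\alpha_0(S)\beta_1(T)$ bounded by $\gamma_1(S\cup T)\delta_0(S\cap T)$, and $\alpha_1(S)\beta_1(T)\leq\gamma_1(S\cup T)\delta_1(S\cap T)$. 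The two pure pieces, $\alpha_0\beta_0$ and $\alpha_1\beta_1$, are then controlled directly by the inductive hypothesis applied to the quadruples $(\alpha_0,\beta_0,\gamma_0,\delta_0)$ and $(\alpha_1,\beta_1,\gamma_1,\delta_1)$.

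The hard part will be the cross term: one must dominate $\alpha_0\beta_1+\alpha_1\beta_0$ by $\gamma_0\delta_1+\gamma_1\delta_0$ at every degree. Bounding the two summands separately through the inductive hypothesis only produces a bound by $2\,\gamma_1\delta_0$, which over-counts and is useless; one genuinely needs to know that $\gamma_0\delta_1$ is large, and this information is simply not present termwise. (In fact the corresponding statement for arbitrary non-negative coefficient sequences is false, so no purely formal manipulation of the four graded hypotheses can succeed.) The missing leverage must come from the pointwise multiplicative identity $\alpha_0(S)\beta_1(T)\cdot\alpha_1(S)\beta_0(T)=\alpha_0(S)\beta_0(T)\cdot\alpha_1(S)\beta_1(T)$, together with its $\gamma,\delta$ counterpart, fed into the full strength of Theorem~\ref{4FT} on a suitable auxiliary lattice rather than used termwise. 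Converting this product constraint on the cross factors into the required additive bound on the cross coefficient is the crux of the argument and the point at which the original four functions theorem enters essentially; this is the step I expect to demand the most care.
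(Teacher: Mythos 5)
Your reduction of the coefficientwise statement to the graded inequalities $\sum_{|A|+|B|=k}\alpha(A)\beta(B)\leq\sum_{|C|+|D|=k}\gamma(C)\delta(D)$ is correct and coincides with the paper's first step, and your observation that the pointwise bound $P_\alpha(q)P_\beta(q)\leq P_\gamma(q)P_\delta(q)$ for real $q>0$ is strictly weaker than coefficient domination is a fair diagnosis of where the difficulty lies. But your argument stops exactly at that difficulty: the cross term $\alpha_0\beta_1+\alpha_1\beta_0$ versus $\gamma_0\delta_1+\gamma_1\delta_0$ \emph{is} the theorem, and you offer only the remark that a multiplicative identity should be ``fed into the full strength of Theorem~\ref{4FT} on a suitable auxiliary lattice'' without specifying the lattice, the four functions, or the application. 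This is not a deferrable verification. Indeed, Section~3 of the paper shows that the natural termwise form of precisely this cross inequality --- $\alpha(A)\beta(B)+\alpha(B)\beta(A)\leq\gamma(A)\delta(B)+\gamma(B)\delta(A)$, which is what a ``peel off one element at a time'' induction would bottom out in --- is \emph{false} already on $\mathcal{P}(2)$, even though it holds on $\mathcal{P}(1)$. So the aggregated cross inequality cannot be reached by any termwise or local manipulation of the four membership-pattern hypotheses you list, and there is genuine reason to doubt that your induction closes in the form you set it up.

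The paper resolves the graded inequality by a different decomposition: instead of conditioning on membership of the element $n$, it conditions on the pair $(F,G)=(A\cap B,\,A\cup B)$. Since $|A|+|B|=|F|+|G|$, fixing $(F,G)$ with $F\subseteq G$ and $|F|+|G|=2k$ automatically fixes the degree, and after restricting to the sublattice $\mathcal{P}(G\setminus F)$ the required inequality becomes the ``antipodal'' statement $\sum_{A}\alpha'(A)\beta'(A^c)\leq\sum_{C}\gamma'(C)\delta'(C^c)$ (Theorem~\ref{q-4FT-stronger}). That statement is then proved by setting $f(A)=\alpha(A)\beta(A^c)$ and $g(A)=\gamma(A^c)\delta(A)$, checking $f(A)f(B)\leq g(B\setminus A)g(A\setminus B)$, and invoking a complemented form of the four functions theorem (Lemma~\ref{4FT-setminus version}) to conclude $f(\mathcal{P}(n))^2\leq g(\mathcal{P}(n))^2$. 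To salvage your route you would need a proof of the cross-term inequality that injects global information in some comparable way; as written, the crux is identified but not supplied, so the proposal is incomplete.
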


We begin by showing that it is indeed enough to prove the above
theorem.

\begin{proof}[Theorem~\ref{q-4FT} implies Theorem~\ref{q-4FT-Lattice}]
Let $\phi : L \to \mathcal{P}(n)$ be the embedding given by
Theorem~\ref{BRT}. For every $A \in \mathcal{P}(n)$ we let
$\alpha'(A)$ to be equal to $\alpha(x)$ if $x \in X$ and
$\phi(x)=A$. Otherwise we put $\alpha'(A)=0$. We define
$\beta',\gamma'$ and $\delta'$ analogously. Observe that the
functions $\alpha',\beta',\gamma'$ and $\delta'$ satisfy the
conditions of Theorem~\ref{q-4FT}. Indeed, $\alpha'(A)\beta'(B)$ is
non-zero only when $\phi(x)=A$ and $\phi(y) = B$ for some $x \in X,y
\in Y$. But in this case we have $A \cup B = \phi(x) \cup \phi(y) =
\phi(x \vee y)$ and thus $\gamma'(A \cup B) = \gamma(x \vee y)$.
Similarly we also have $\delta'(A \cap B') = \gamma(x \wedge y)$ and
thus $\alpha'(A)\beta'(B) \leq \gamma'(A \cup B) \delta'(A \cap B)$
as required. The inequality now follows as $r(a) = |\phi(a)|$ for
every $a \in L$.
\end{proof}

In fact, instead of Theorem~\ref{q-4FT} we will prove the following
stronger assertion.

\begin{theorem}\label{q-4FT-stronger}
Let $\alpha,\beta,\gamma$ and $\delta$ be functions from
$\mathcal{P}(n)$ to the set of non-negative reals satisfying
\[
\alpha(A)\beta(B) \leq \gamma(A \cup B) \delta(A \cap B)
\]
for every $A,B \in \mathcal{P}(n)$. Then
\[
\sum_{A \in \mathcal{P}(n)}\alpha(A)\beta(A^c) \leq \sum_{C \in
\mathcal{P}(n)}\gamma(C) \delta(C^c).
\]
\end{theorem}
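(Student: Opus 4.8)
The plan is to prove the equivalent Boolean statement Theorem~\ref{q-4FT-stronger} by induction on $n$, peeling off the element $n$ in the manner of the Ahlswede--Daykin proof of Theorem~\ref{4FT}, while carefully tracking the effect of complementation on the last coordinate.

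For the base case $n=1$, set $p=\alpha(\emptyset)\beta(\{1\})$, $q=\alpha(\{1\})\beta(\emptyset)$, $u=\gamma(\{1\})\delta(\emptyset)$ and $v=\gamma(\emptyset)\delta(\{1\})$, so that the claim reads $p+q\le u+v$. Applying the hypothesis to the pairs $(\emptyset,\{1\})$ and $(\{1\},\emptyset)$ gives $p\le u$ and $q\le u$, while multiplying the two instances of the hypothesis coming from the aligned pairs $(\emptyset,\emptyset)$ and $(\{1\},\{1\})$ gives $pq\le uv$. The inequality $p+q\le u+v$ then follows from the elementary fact that $(u-p)(u-q)\ge 0$: when $u>0$ this rearranges to $u(p+q)\le u^2+pq\le u^2+uv$, and the case $u=0$ forces $p=q=0$.

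For the inductive step I would split each function according to whether it contains $n$: for $X\subseteq[n-1]$ put $\alpha_0(X)=\alpha(X)$ and $\alpha_1(X)=\alpha(X\cup\{n\})$, and similarly for $\beta,\gamma,\delta$, writing $X^c=[n-1]\setminus X$ for the complement inside $[n-1]$. Because complementation flips the last coordinate, the left-hand side splits as $F=P+Q$ with $P=\sum_X\alpha_0(X)\beta_1(X^c)$ and $Q=\sum_X\alpha_1(X)\beta_0(X^c)$, and the right-hand side as $G=U+V$ with $U=\sum_X\gamma_1(X)\delta_0(X^c)$ and $V=\sum_X\gamma_0(X)\delta_1(X^c)$. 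A short check shows that both quadruples $(\alpha_0,\beta_1,\gamma_1,\delta_0)$ and $(\alpha_1,\beta_0,\gamma_1,\delta_0)$ satisfy the hypothesis of the theorem on $\mathcal{P}(n-1)$ (the relevant joins acquire $n$ and the meets do not, which is exactly why the common upper pair is $(\gamma_1,\delta_0)$), so the induction hypothesis yields $P\le U$ and $Q\le U$. If in addition the aligned product bound $PQ\le UV$ holds, then the base-case computation applies verbatim with $P,Q,U,V$ in place of $p,q,u,v$ and gives $F=P+Q\le U+V=G$, closing the induction.

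The main obstacle is precisely the bound $PQ\le UV$. For $n=1$ this was merely the product of two pointwise instances of the hypothesis, but for $n\ge 2$ it is a genuine inequality between products of two complementary sums, encoding a correlation phenomenon (forcing $V$ to be small forces one of $P,Q$ to be small). The tempting shortcut---writing $P,Q,U,V$ as full sums of the functions $X\mapsto\alpha_0(X)\beta_1(X^c)$, etc., and deducing $PQ\le UV$ from the product form of Theorem~\ref{4FT}---does not go through directly: grouping $\alpha_0(X)\beta_1(X^c)\,\alpha_1(Y)\beta_0(Y^c)$ and applying the two aligned hypotheses produces a bound whose arguments are the mixed sets $X\cup Y^c$, $X\cap Y^c$, $X^c\cup Y$, $X^c\cap Y$ rather than $X\cup Y$ and $X\cap Y$, so the complementation is not matched by the join and meet. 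Making this matching work---most plausibly by applying Theorem~\ref{4FT} on the product lattice $\mathcal{P}(n-1)\times\mathcal{P}(n-1)$ after the reindexing $Y\mapsto Y^c$, or by isolating $PQ\le UV$ as a separate statement proved by its own induction---is the delicate heart of the argument, and it is here that Theorem~\ref{4FT} is genuinely invoked.
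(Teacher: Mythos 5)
Your reduction is set up correctly as far as it goes: the base case $n=1$, the splitting of both sides according to whether $A$ contains $n$, the verification that the quadruples $(\alpha_0,\beta_1,\gamma_1,\delta_0)$ and $(\alpha_1,\beta_0,\gamma_1,\delta_0)$ satisfy the hypothesis on $\mathcal{P}(n-1)$ (so that $P\le U$ and $Q\le U$), and the quadratic trick $(U-P)(U-Q)\ge 0$ are all sound. But the proposal stops exactly where the real work begins: the inequality $PQ\le UV$ is explicitly flagged as the ``delicate heart'' and is never proved, and without it the induction does not close. This is a genuine gap rather than a routine verification, because $PQ\le UV$ --- a product of two complement-paired sums bounded by another such product --- carries essentially the same analytic content as the theorem itself; the induction on $n$ has not actually reduced the difficulty.

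The obstruction you correctly identify (grouping the terms of $PQ$ produces the mixed sets $X\cup Y^c$, $X\cap Y^c$, $X^c\cup Y$, $X^c\cap Y$ rather than a join and a meet) is resolved in the paper by a ``set-minus'' variant of Theorem~\ref{4FT}, namely Lemma~\ref{4FT-setminus version}: if $\alpha(A)\beta(B)\le\gamma(B\setminus A)\delta(A\setminus B)$ pointwise, then $\alpha(\mathcal{P}(n))\beta(\mathcal{P}(n))\le\gamma(\mathcal{P}(n))\delta(\mathcal{P}(n))$. This is deduced from Theorem~\ref{4FT} by pre-composing $\beta$ and $\gamma$ with complementation, which is precisely the reindexing $Y\mapsto Y^c$ you gesture at; the point is that $A\cup B^c=(B\setminus A)^c$ and $A\cap B^c=A\setminus B$, and summation over all of $\mathcal{P}(n)$ is invariant under complementation. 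Once this lemma is in hand, your inductive scaffolding becomes unnecessary: the paper simply sets $f(A)=\alpha(A)\beta(A^c)$ and $g(A)=\gamma(A^c)\delta(A)$ on $\mathcal{P}(n)$ itself, checks that $f(A)f(B)\le g(B\setminus A)g(A\setminus B)$, and applies the lemma with $\alpha=\beta=f$, $\gamma=\delta=g$ to get $\bigl(\sum_A f(A)\bigr)^2\le\bigl(\sum_A g(A)\bigr)^2$, whence the theorem by non-negativity. (The same lemma, applied with four distinct functions, would also yield your $PQ\le UV$, but at that point the induction is dead weight.) To complete your write-up you would need to state and prove this set-minus lemma, or something equivalent to it.
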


Before proving Theorem~\ref{q-4FT-stronger} we show how it implies
Theorem~\ref{q-4FT}.

\begin{proof}[Theorem~\ref{q-4FT-stronger} implies Theorem~\ref{q-4FT}]
We need to prove one inequality for each power of $q$. For the
coefficients of $q^k$ we need to prove that
\[
\sum_{|A| + |B| = k} \alpha(A) \beta(B) \leq \sum_{|C| + |D| = k }
\gamma(C)\delta(D).
\]
Since
\[
\sum_{|A| + |B| = k} \alpha(A) \beta(B) = \sum_{F,G}
\mathop{\sum_{|A| + |B|=k}}_{A \cap B = F,A \cup B = G} \alpha(A)
\beta(B),
\]
it is enough to prove that for each $F,G \in \mathcal{P}(n)$ the
following inequality holds:
\[
\mathop{\sum_{|A|+|B|=k}}_{A \cap B = F,A \cup B = G} \alpha(A)
\beta(B) \leq \mathop{\sum_{|C|+|D|=k}}_{C \cap D = F, C \cup D = G}
\gamma(C) \delta(D).
\]
The inequality is vacuously true unless $F \subseteq G$ and $|F| +
|G| = 2k$. In this case, we apply Theorem~\ref{q-4FT-stronger} on
the functions $\alpha',\beta',\gamma'$ and $\delta'$ from
$\mathcal{P}(G \setminus F)$ to $\mathbb{R}$, where $\alpha'(A) =
\alpha(A \cup F)$ and $\beta',\gamma'$ and $\delta'$ are defined
analogously. Since
\[
\alpha'(A)\beta'(B) = \alpha(A \cup F)\beta(B \cup F) \leq \gamma(A
\cup B \cup F) \delta((A \cap B) \cup F) = \gamma'(A \cup
B)\delta'(A \cap B),
\]
Theorem~\ref{q-4FT-stronger} gives
\[
\sum_{A \in \mathcal{P}(G \setminus F)}\alpha'(A)\beta'(A^c) \leq
\sum_{C \in \mathcal{P}(G \setminus F)}\gamma'(C) \delta'(C^c).
\]
or equivalently
\[
\sum_{A \in \mathcal{P}(G \setminus F)}\alpha(A \cup F)\beta(G
\setminus A) \leq \sum_{C \in \mathcal{P}(G \setminus F)}\gamma(C
\cup F) \delta(G \setminus C ).
\]
But this is exactly the inequality we wanted to prove.
\end{proof}

We need one more lemma before proceeding to the proof of
Theorem~\ref{q-4FT-stronger}.

\begin{lemma}\label{4FT-setminus version}
Let $\alpha,\beta,\gamma$ and $\delta$ be functions from
$\mathcal{P}(n)$ to the set of non-negative reals satisfying
\[
\alpha(A)\beta(B) \leq \gamma(B \setminus A) \delta(A \setminus B)
\]
for every $A,B \in \mathcal{P}(n)$. Then
\[
\alpha(\mathcal{P}(n)) \beta(\mathcal{P}(n)) \leq
\gamma(\mathcal{P}(n)) \delta(\mathcal{P}(n)).
\]
\end{lemma}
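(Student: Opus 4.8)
The plan is to reduce this lemma directly to the four functions theorem (Theorem~\ref{4FT}) by means of a complementation substitution. The obstacle to invoking Theorem~\ref{4FT} as it stands is that the right-hand side here involves the set differences $B \setminus A$ and $A \setminus B$, which are not the lattice operations $\cup$ and $\cap$ appearing in the hypothesis of the four functions theorem. However, writing $B \setminus A = B \cap A^c$ and $A \setminus B = A \cap B^c$ suggests that replacing the summation variable $B$ by its complement ought to convert these differences into a genuine join and meet.

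Concretely, I would keep $\alpha$ and $\delta$ unchanged and introduce the complemented functions $\beta'(B) = \beta(B^c)$ and $\gamma'(C) = \gamma(C^c)$. The key computation is to verify that $\alpha,\beta',\gamma',\delta$ satisfy the hypothesis of Theorem~\ref{4FT}. Applying the lemma's hypothesis with $B$ replaced by $B^c$ gives
\[
\alpha(A)\beta(B^c) \leq \gamma(B^c \setminus A)\,\delta(A \setminus B^c).
\]
Now $B^c \setminus A = A^c \cap B^c = (A \cup B)^c$ and $A \setminus B^c = A \cap B$, so the right-hand side equals $\gamma\bigl((A \cup B)^c\bigr)\,\delta(A \cap B) = \gamma'(A \cup B)\,\delta(A \cap B)$. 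Hence $\alpha(A)\beta'(B) \leq \gamma'(A \cup B)\,\delta(A \cap B)$ for all $A,B \in \mathcal{P}(n)$, which is exactly the pointwise hypothesis required by Theorem~\ref{4FT}.

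With the hypothesis in place, I would apply Theorem~\ref{4FT} with $\mathcal{A} = \mathcal{B} = \mathcal{P}(n)$, for which $\mathcal{A} \vee \mathcal{B} = \mathcal{A} \wedge \mathcal{B} = \mathcal{P}(n)$, to obtain $\alpha(\mathcal{P}(n))\,\beta'(\mathcal{P}(n)) \leq \gamma'(\mathcal{P}(n))\,\delta(\mathcal{P}(n))$. Finally I would note that complementation $B \mapsto B^c$ is a bijection of $\mathcal{P}(n)$, so summing over all sets is unaffected by complementing the argument; thus $\beta'(\mathcal{P}(n)) = \beta(\mathcal{P}(n))$ and $\gamma'(\mathcal{P}(n)) = \gamma(\mathcal{P}(n))$. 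Substituting these equalities yields precisely $\alpha(\mathcal{P}(n))\,\beta(\mathcal{P}(n)) \leq \gamma(\mathcal{P}(n))\,\delta(\mathcal{P}(n))$, as desired. I expect the only real subtlety to be spotting the correct substitution; once $\beta$ and $\gamma$ are composed with complementation, the verification reduces to the routine set identities above and the conclusion follows immediately from Theorem~\ref{4FT}.
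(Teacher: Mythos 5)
Your proposal is correct and follows essentially the same route as the paper: both complement the arguments of $\beta$ and $\gamma$ (keeping $\alpha$ and $\delta$ fixed), verify the Ahlswede--Daykin hypothesis via the identities $B^c \setminus A = (A \cup B)^c$ and $A \setminus B^c = A \cap B$, and apply Theorem~\ref{4FT} with $\mathcal{A} = \mathcal{B} = \mathcal{P}(n)$. If anything, your displayed verification is slightly cleaner than the one printed in the paper.
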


\begin{proof}
We just apply Theorem~\ref{4FT} to the functions
$\alpha,\beta',\gamma'$ and $\delta$, where $\beta'(B) :=
\beta(B^c)$ and $\gamma'(C) := \gamma(C^c)$. The lemma follows
directly since for any $A,B \in \mathcal{P}(n)$ we have
\[
\alpha(A)\beta'(B) = \alpha(A) \beta(B^c) \leq \gamma(A \cup B^c)
\delta (A \cap B^c) = \gamma'(B \setminus A) \delta(A \setminus B).
\qedhere
\]
\end{proof}

We are now ready to prove Theorem~\ref{q-4FT-stronger}. This will
complete the proof of Theorem~\ref{q-4FT-Lattice}.

\begin{proof}[Proof of Theorem~\ref{q-4FT-stronger}]
Let us define $f:\mathcal{P}(n) \to \mathbb{R}$ by $f(A) = \alpha(A)
\beta(A^c)$ and $g:\mathcal{P}(n) \to \mathbb{R}$ by $g(A) =
\gamma(A^c) \delta(A)$. Then
\begin{align*}
f(A)f(B) &= \left(\alpha(A) \beta(B^c) \right) \left(\alpha(A^c) \beta(B) \right)\\
&\leq \gamma(A \cup B^c) \delta(A \cap B^c) \gamma(A^c \cup B)
\delta(A^c \cap B) \\
&= g(A \cap B^c) g(A^c \cap B) = g(B \setminus A)g(A \setminus B).
\end{align*}
Thus, applying Lemma~\ref{4FT-setminus version} with $\alpha = \beta
= f$ and $\gamma=\delta=g$, we obtain that
\[
\left(f(\mathcal{P}(n))\right)^2 \leq
\left(g(\mathcal{P}(n))\right)^2.
\]
The result now follows as all functions used take only non-negative
values.
\end{proof}

\section{A counterexample to a stronger conjecture}

In our attempt to prove Theorem~\ref{q-4FT-Lattice} we arrived at
the following conjecture.

\begin{conjecture}\label{False conjecture}
Let $\alpha,\beta,\gamma$ and $\delta$ be functions from
$\mathcal{P}(n)$ to the set of non-negative reals satisfying
\[
\alpha(A)\beta(B) \leq \gamma(A \cup B) \delta(A \cap B)
\]
for every $A,B \in \mathcal{P}(n)$. Then
\[
\alpha(A) \beta(B) + \alpha(B) \beta(A) \leq \gamma(A) \delta(B) +
\gamma(B) \delta(A).
\]
\end{conjecture}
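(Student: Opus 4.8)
The section heading already signals that this statement is false, so the plan is not to prove the conjecture but to refute it, and the natural route is to first attempt a proof, watch it fail, and read off a counterexample from the failure. Writing $G = A \cup B$ and $F = A \cap B$, the premise applied to the pairs $(A,B)$ and $(B,A)$ gives $\alpha(A)\beta(B) \leq \gamma(G)\delta(F)$ and $\alpha(B)\beta(A) \leq \gamma(G)\delta(F)$, so the left-hand side is at most $2\gamma(G)\delta(F)$. Applying the premise to the diagonal pairs $(A,A)$ and $(B,B)$ gives $\gamma(A)\delta(A) \geq \alpha(A)\beta(A)$ and $\gamma(B)\delta(B) \geq \alpha(B)\beta(B)$, and two applications of the AM--GM inequality then yield
\[
\gamma(A)\delta(B) + \gamma(B)\delta(A) \geq 2\sqrt{\gamma(A)\delta(A)\,\gamma(B)\delta(B)} \geq 2\sqrt{\bigl(\alpha(A)\beta(B)\bigr)\bigl(\alpha(B)\beta(A)\bigr)}.
\]

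Comparing the two estimates is where the argument stalls, and the reason is instructive. The lower bound on the right-hand side controls only the geometric mean of $\alpha(A)\beta(B)$ and $\alpha(B)\beta(A)$, whereas the left-hand side is their sum. If one of these two products is large while the other vanishes, the sum stays bounded away from zero but the premise places no positive lower bound on $\gamma(A)\delta(B) + \gamma(B)\delta(A)$, which may then be forced down to $0$. This is exactly the configuration I would engineer.

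Concretely I would take $n = 2$, set $A = \{1\}$ and $B = \{2\}$, and let the four functions be the indicators $\alpha(\{1\}) = \beta(\{2\}) = \gamma(\{1,2\}) = \delta(\emptyset) = 1$ with every other value equal to $0$. To verify the premise one only has to inspect the single pair for which $\alpha(X)\beta(Y) \neq 0$, namely $X = \{1\}$, $Y = \{2\}$; there $\alpha(X)\beta(Y) = 1$ while $\gamma(X \cup Y)\delta(X \cap Y) = \gamma(\{1,2\})\delta(\emptyset) = 1$, and all remaining instances read $0 \leq (\text{nonnegative})$. Thus the hypothesis holds, yet the proposed conclusion for $A = \{1\}$, $B = \{2\}$ reads
\[
\alpha(\{1\})\beta(\{2\}) + \alpha(\{2\})\beta(\{1\}) \leq \gamma(\{1\})\delta(\{2\}) + \gamma(\{2\})\delta(\{1\}),
\]
that is $1 \leq 0$, which is false.

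The genuine difficulty here is conceptual: one must accept that the conjecture fails and pinpoint the structural defect, after which the example is immediate and its verification routine. It is worth recording why this does not clash with Theorem~\ref{q-4FT}. There the coefficient of each $q^{k}$ pits a sum over all ordered pairs $(A,B)$ with a fixed meet $A \cap B = F$ and join $A \cup B = G$ against the corresponding sum for $\gamma,\delta$, and that fibre also contains the extreme pair $(F,G)$ itself. For the functions above the whole right-hand contribution lives on $\gamma(\{1,2\})\delta(\emptyset) = 1$, supplied by the pair $(\emptyset,\{1,2\})$, and this precisely balances $\alpha(\{1\})\beta(\{2\}) = 1$. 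The conjecture instead restricts the comparison to the single pair $\{A,B\}$ and thereby forbids borrowing this extreme term; that is exactly why it is strictly stronger than Theorem~\ref{q-4FT}, and, as the example shows, too strong to be true.
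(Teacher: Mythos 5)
Your refutation is correct and takes essentially the same route as the paper: an explicit counterexample on $\mathcal{P}(2)$ violating the conclusion at $A=\{1\}$, $B=\{2\}$. Your choice of functions (each supported on a single set, so the hypothesis needs checking for only one pair) is in fact a little leaner than the table in the paper, but the idea and the verification are the same.
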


Conjecture~\ref{False conjecture} is easily seen to imply
Theorem~\ref{q-4FT-stronger} and thus Theorem~\ref{q-4FT}. (In fact,
it is easy to deduce Theorem~\ref{q-4FT} from Conjecture~\ref{False
conjecture} directly without going through
Theorem~\ref{q-4FT-stronger}.) It can be checked that the conjecture
is true for $n=1$ and a simple inductive argument which we omit
shows that if the conjecture were true in the case $n=2$ as well,
then it would be true for every positive integer $n$. Unfortunately,
it turns out that the conjecture is false in the case $n=2$ as can
be checked by defining $\alpha,\beta,\gamma$ and $\delta$ as
suggested in the following table.\vspace{10pt}

\begin{center}
\begin{tabular}{c|c|c|c|c}
            & $\alpha$ & $\beta$ & $\gamma$ & $\delta$ \\ \hline
$\emptyset$ & 0        & 1       & 0        & 1 \\ \hline
$\{1\}    $ & 0        & 1       & 0        & 0 \\ \hline
$\{2\}    $ & 1        & 1       & 1        & 1 \\ \hline
$\{1,2\}  $ & 0        & 0       & 1        & 0 \\
\end{tabular}
\end{center}\vspace{10pt}

There are only three pairs $(A,B)$ for which $\alpha(A) \beta(B)$ is
non-zero and in each one of them one can check that the inequality
$\alpha(A)\beta(B) \leq \gamma(A \cup B) \delta(A \cap B)$ holds.
However, taking $A = \{1\}$ and $B = \{2\}$ we have $\alpha(A)
\beta(B) + \alpha(B) \beta(A) = 1 > 0 = \gamma(A) \delta(B) +
\gamma(B) \delta(A)$ thus disproving the conjecture.

\appendix
\section{}

In this appendix we give a proof of Birkhoff's representation
theorem. It is usually stated without condition (iii) but this
condition, which is needed in the proof of
Theorem~\ref{4FT-Lattice}, follows easily from the usual proof of
the theorem.

\begin{theorem}\label{BRT}
Let $L$ be a finite distributive lattice. Then there is a positive
integer $n$ and an injective function $\phi:L \to \mathcal{P}[n]$
satisfying
\begin{itemize}
\item[(i)] $\phi(a \wedge b) = \phi(a) \cap \phi(b)$ for all $a,b \in
L$;
\item[(ii)] $\phi(a \vee b) = \phi(a) \cup \phi(b)$ for all $a,b \in
L$;
\item[(iii)] $r(a) = |\phi(a)|$ for all $a \in L$.
\end{itemize}
\end{theorem}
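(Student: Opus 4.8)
The plan is to use the classical construction of $\phi$ via join-irreducible elements; properties (i) and (ii) are precisely the content of the usual Birkhoff theorem, so the genuinely extra work lies in verifying (iii). Recall that $p \in L$ is \emph{join-irreducible} if $p$ is not the bottom element $\hat{0}$ and $p = x \vee y$ forces $p = x$ or $p = y$. Let $J(L)$ denote the set of join-irreducibles, set $n = |J(L)|$, identify $[n]$ with $J(L)$, and define $\phi(x) = \{p \in J(L) : p \leq x\}$. Then $\phi$ is visibly order-preserving. I would first record the standard fact that in a finite lattice every element is the join of the join-irreducibles lying below it, i.e.\ $x = \bigvee \phi(x)$, which follows by a routine downward induction on $x$ (if $x$ is not join-irreducible, write $x = y \vee z$ with $y,z < x$ and apply the inductive hypothesis). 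Injectivity of $\phi$ is then immediate, since $\phi(x) = \phi(y)$ forces $x = \bigvee \phi(x) = \bigvee \phi(y) = y$.

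For (i) I would simply note that $p \leq a \wedge b$ iff $p \leq a$ and $p \leq b$, so the identity holds for any lattice with no hypothesis. Property (ii) is where distributivity enters: the inclusion $\phi(a) \cup \phi(b) \subseteq \phi(a \vee b)$ is trivial, and for the reverse inclusion I take a join-irreducible $p \leq a \vee b$ and write $p = p \wedge (a \vee b) = (p \wedge a) \vee (p \wedge b)$; join-irreducibility then yields $p = p \wedge a$ or $p = p \wedge b$, that is $p \leq a$ or $p \leq b$.

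The crux is (iii), and the key lemma I would isolate is that a single covering step adds exactly one join-irreducible: if $b \lessdot a$ then $|\phi(a) \setminus \phi(b)| = 1$. To see this, note that $\phi(b) \subsetneq \phi(a)$ strictly (else $b = \bigvee \phi(b) = \bigvee \phi(a) = a$), so there is a join-irreducible $p \leq a$ with $p \not\leq b$; then $b < b \vee p \leq a$ forces $b \vee p = a$, whence by (ii) $\phi(a) = \phi(b) \cup \phi(p)$ and so every element of $\phi(a) \setminus \phi(b)$ lies below $p$. Running the same argument with a second element $q$ of $\phi(a) \setminus \phi(b)$ gives $p \leq q$ as well, so $p = q$ and the set is a singleton. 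With this lemma I would finish by a counting argument: along any maximal chain $\hat{0} = x_0 \lessdot \cdots \lessdot x_m = a$ the quantity $|\phi(x_i)|$ increases by exactly $1$ at each step from $|\phi(\hat{0})| = 0$, giving $|\phi(a)| = m$; meanwhile any chain of length $k$ ending at $a$ produces a strictly increasing tower $\phi(x_0) \subsetneq \cdots \subsetneq \phi(x_k)$, so $|\phi(a)| \geq k$. Applying this to the longest chain ending at $a$ (which may be taken to start at $\hat{0}$) gives $|\phi(a)| \geq r(a)$, while the maximal chain gives $r(a) \geq m = |\phi(a)|$, and hence $r(a) = |\phi(a)|$. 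The main obstacle is thus the covering lemma: it is the one lattice-theoretic input beyond the classical representation and the sole place where (iii) fails to follow formally from (i) and (ii).
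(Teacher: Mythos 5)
Your proposal is correct, and for most of the theorem it coincides with the paper's argument: the construction of $\phi$ from the join-irreducibles, the proofs of injectivity and of (i) and (ii), and the inequality $|\phi(a)| \geq r(a)$ (push the longest chain ending at $a$ through the injective order-preserving map $\phi$ to get a strictly increasing tower of sets) are all essentially identical. The genuine divergence is in the reverse inequality $|\phi(a)| \leq r(a)$. The paper fixes a linear extension $x_1,\ldots,x_n$ of the join-irreducibles and, writing $\phi(a) = \{i_1 < \cdots < i_k\}$, exhibits the explicit chain $0 < x_{i_1} < x_{i_1}\vee x_{i_2} < \cdots < x_{i_1}\vee\cdots\vee x_{i_k} \leq a$ of length $k$, whence $r(a) \geq k = |\phi(a)|$. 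You instead prove a covering lemma --- if $b < a$ with nothing strictly between, then $|\phi(a)\setminus\phi(b)| = 1$ --- and count along a saturated chain from $\hat 0$ to $a$ to get $|\phi(a)| = m \leq r(a)$. Both routes are sound. Yours isolates a stronger structural fact (the lattice is graded by $|\phi|$, so all saturated chains between two fixed elements have the same length, the Jordan--Dedekind condition), and your symmetric argument that any two elements $p,q$ of $\phi(a)\setminus\phi(b)$ satisfy $p \leq q$ and $q \leq p$ is a clean way to get the singleton. The paper's route is slightly more economical: it never needs the notion of a cover or of a saturated chain, only the chosen ordering of the join-irreducibles, and it avoids the auxiliary fact $x = \bigvee\phi(x)$ (which you use both for injectivity and for the strictness $\phi(b) \subsetneq \phi(a)$; the paper gets injectivity instead by showing that a minimal element of $\{x : x \leq a,\ x \nleqslant b\}$ is join-irreducible). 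No gaps either way.
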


\begin{proof}
We say that a non-zero element $x$ of $L$ is join-irreducible if $a
\vee b < x$ whenever $a < x$ and $b < x$. (Recall that every finite
lattice $L$ has a unique minimal element which is called the zero of
$L$.) Let $X = \{x_1,\ldots,x_n\}$ be the set of join-irreducible
elements of $L$. By reordering the elements of $X$, we may assume
that if $x_i < x_j$ then $i < j$. For each $a \in L$ define $\phi(a)
= \{i: x_i \leq a\}$. To see that $\phi$ is injective observe that
if $a \nleqslant b$ then the set $S = \{x \in L: x \leq a,x
\nleqslant b\}$ is non-empty (as $b \in S$) and contains a
join-irreducible element. Indeed if $x$ is a minimal element of $S$
and there are $c,d \in L$ with $c,d < x$ and $c \vee d = x$, then by
minimality of $x$ we have that $c,d \notin S$ and so it must be the
case that $c,d \leqslant b$. But then $x = c \vee d \leqslant b$, a
contradiction. Observe that (i) follows immediately from the
definition of $\phi$. It also follows immediately from the
definition that $\phi(a) \cup \phi(b) \subseteq \phi(a \vee b)$. To
complete the proof of (ii) observe that if $x \in X$ with $x \leq a
\vee b$ but $x \nleqslant a$ and $x \nleqslant b$, then $x \wedge a
< x, x \wedge b < x$ but $(x \wedge a) \vee (x \wedge b) = x \wedge
(a \vee b) = x$, contradicting the fact that $x \in X$. It remains
to prove (iii). Let $a \in L$ and suppose that $\phi(a) =
\{i_1,\ldots,i_k\}$, where $i_1 < \cdots < i_k$. Since $0 < x_{i_1}
< x_{i_1} \vee x_{i_2} < \cdots < x_{i_1} \vee \cdots \vee x_{i_k}
\leq a$, it follows that $|\phi(a)| = k \leq r(a)$. On the other
hand, if $0 < y_1 < y_2 < \ldots < y_r = a$ is a longest chain
having $a$ as a maximal element, then, since $\phi$ is injective, we
have $\emptyset \varsubsetneq \phi(y_1) \varsubsetneq \phi(y_2)
\varsubsetneq \cdots \varsubsetneq \phi(y_r)$ showing that
$|\phi(a)| \geq r = r(a)$. This completes the proof of (iii) and
thus of the theorem.
\end{proof}


\begin{thebibliography}{9}

\bibitem{Ahlswede&Daykin78} R. Ahlswede and D. E. Daykin, An
inequality for the weights of two families of sets, their unions and
intersections, Z. Wahrsch. Verw. Gebiete {\bf 43} (1978), 183--185.

\bibitem{Anderson02} I. Anderson, {\it Combinatorics of finite sets},
Corrected reprint of the 1989 edition, Dover, 2002.

\bibitem{Birkhoff79} G. Birkhoff, {\it Lattice theory}, Corrected
reprint of the 1967 third edition, Amer. Math. Soc., 1979.

\bibitem{Bjorner} A. Bj\"{o}rner, A $q$-analogue of the FKG
inequality and some applications, arXiv:0906.1389

\bibitem{Bollobas86} B. Bollob\'as, {\it Combinatorics}, Cambridge
Univ. Press, Cambridge, 1986.

\bibitem{Fortuin&Kasteleyn&Ginibre71} C. M. Fortuin, P. W. Kasteleyn
and J. Ginibre, Correlation inequalities on some partially ordered
sets, Comm. Math. Phys. {\bf 22} (1971), 89--103.

\end{thebibliography}
\end{document}